\title{A Cartesian Promonoidal Kernel on \(\Delta\) and a Hadamard Contraction of \(\Delta^n\)}
\author{Florian Lengyel, CUNY}
\date{October 21, 2025}
\newtheorem{definition}{Definition}
\newtheorem{proposition}{Proposition}
\newtheorem{theorem}{Theorem}
\newtheorem{remark}{Remark}
\newcommand{\D}{\Delta}
\newcommand{\op}{\mathrm{op}}
\newcommand{\id}{\mathrm{id}}
\newcommand{\Set}{\mathbf{Set}}
\newcommand{\boxt}{\mathbin{\boxtimes}} 
\newcommand{\starc}{\mathbin{\star}} 
\begin{document}
\maketitle

\begin{abstract}
We exhibit a symmetric promonoidal kernel on the simplex category \(\Delta\) with Cartesian unit,
yielding on representable functors a ‘Hadamard’ natural transformation \(\Delta^p\times\Delta^q\to\Delta^{pq}\) based on pointwise multiplication of nondecreasing maps.
Specializing to \(q=1\) yields a simplicial homotopy contracting \(\Delta^n\) to its \(0\)-vertex.
The contraction is classical; the promonoidal presentation and induced Hadamard map appear not to be recorded.
\end{abstract}

\texttt{MSC codes: 18D10, 18M15, 18A40, 55U10}

\section{Introduction}
The simplicial category \(\D\) has as objects the finite ordinals \([n]=\{0<1<\cdots<n\}\) (\(n\in\mathbb N\)) and as morphisms the nondecreasing maps. Its presheaf category \(\widehat{\D}=\mathrm{Fun}(\D^{\op},\Set)\) is the category of simplicial sets. We denote the representable simplicial set corresponding to \([n]\) by \(\D^n:=\D(-,[n])\).

We use the Cartesian promonoidal structure on \(\D\) with kernel \(P([p],[q];[r])\cong\D([r],[p])\times\D([r],[q])\) and unit \(J([r])=\D([r],[0])\),
so that Day convolution on \(\widehat{\D}\) is the levelwise product. This is distinct from the \emph{ordinal sum} monoidal structure on \(\D\) (see Mac Lane~\cite{MacLane1998CWM}), which induces the \emph{join} operation on simplicial sets (e.g., Lurie~\cite{Lurie2009}).

Independently, pointwise multiplication of monotone maps defines a natural
“Hadamard” transformation \(\delta_{p,q}:P(p,q;-)\to\D(-,[pq])\), and hence, on representables, \(H_{p,q}:\D^p\times\D^q\to\D^{pq}\).
Specializing \(q=1\) yields a simplicial homotopy \(H^n:\D^n\times\D^1\to\D^n\) from the constant \(0\)-vertex map to the identity; under realization this is the
standard PL contraction of \(|\D^n|\) to the \(0\)-vertex.

\section{Promonoidal Categories and Day Convolution}
We recall the definitions of promonoidal structures and the associated convolution developed by Day~\cite{Day1970, Day1974} (see also~\cite{DayStreet1995, Kelly1982}).

\begin{definition}[Promonoidal category]
A \emph{promonoidal structure} on a small category \(C\) consists of a kernel \(P: C\times C\times C^{\op}\to\Set\), a unit profunctor \(J: C^{\op}\to\Set\), and natural isomorphisms (associator \(\alpha\) and unitors \(\lambda, \rho\)) satisfying Mac~Lane's coherence axioms. It is \emph{symmetric} if equipped with coherent symmetry isomorphisms \(\sigma\).
\end{definition}

\begin{definition}[Day convolution]
Given a promonoidal \(C\), the \emph{Day convolution} on \(\widehat{C}\) is the monoidal structure defined by
\[
(F\starc G)(u)\ :=\ \int^{x,y\in C} P(x,y;u)\times F(x)\times G(y),\qquad
\mathbf{1}_{\starc}(u):=J(u).
\]
\end{definition}

\section{The Cartesian Promonoidal Structure on \(\D\)}
We define a symmetric promonoidal structure on \(C=\D\).

\subsection{The Kernel \(P\) and Unit \(J\)}
Define the kernel \(P: \D\times\D\times\D^{\op}\to\Set\) by the coend
\[
P([p],[q];[r])\ :=\ \int^{[m]\in\D}\ \D([m],[p])\times \D([m],[q])\times \D([r],[m]).
\]
The action on morphisms is defined by composition in the respective factors. For example, \(P(\id,\id;h)\) acts by precomposition with \(h:[r']\to[r]\) in the third (contravariant) factor.

We define the unit profunctor \(J: \D^{\op}\to\Set\) as \(J([r])\ :=\ \D([r],[0])\). Since \([0]\) is terminal in \(\D\), \(J([r])\) is always a singleton set.

\subsection{Identification and Coherence}

\begin{proposition}\label{prop:cartesian_P}
The kernel \(P\) is naturally isomorphic to the Cartesian product profunctor:
\[
\eta_{p,q;r}:\ P([p],[q];[r])\ \xrightarrow{\ \cong\ }\ \D([r],[p])\times \D([r],[q]).
\]
\end{proposition}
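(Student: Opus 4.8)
The plan is to recognize the defining coend as an instance of the co-Yoneda (density) lemma \cite{Kelly1982}, so that no genuine computation is required beyond tracking variances. The crucial observation is that, for fixed \([p]\) and \([q]\), the assignment \(F([m]):=\D([m],[p])\times\D([m],[q])\) is a presheaf on \(\D\) (contravariant in \([m]\), since each factor is), while the remaining factor \(\D([r],[m])\) is precisely the covariant hom-functor \(\D([r],-)\) evaluated at \([m]\). Thus the coend defining \(P([p],[q];[r])\) has the shape \(\int^{[m]}\D([r],[m])\times F([m])\), which the density formula identifies with \(F([r])=\D([r],[p])\times\D([r],[q])\).

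First I would record the explicit maps realizing this isomorphism, since they are needed later for coherence. A representing triple \((f,g,h)\) with \(f:[m]\to[p]\), \(g:[m]\to[q]\), \(h:[r]\to[m]\) is sent by \(\eta_{p,q;r}\) to the pair \((f\circ h,\,g\circ h)\in\D([r],[p])\times\D([r],[q])\); conversely a pair \((a,b)\) is sent to the class of \((a,b,\id_{[r]})\) at the stage \([m]=[r]\). I would then verify that \(\eta_{p,q;r}\) is well defined on coend classes: the dinaturality relation identifies \((f\circ u,\,g\circ u,\,h)\) with \((f,\,g,\,u\circ h)\) for every \(u:[a]\to[b]\), and both are sent to the same pair by associativity of composition.

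Next I would check the two composites. One direction is immediate from \(a\circ\id_{[r]}=a\). For the other, applying the dinaturality relation with \(u=h\) shows that \((f\circ h,\,g\circ h,\,\id_{[r]})\) and \((f,g,h)\) represent the same class, so the round trip is the identity. This is the only place where the coend relation does real work, and it is the step I would be most careful to get right, because it is where a naive attempt to ``represent'' \(F\) by a single product object would fail: the category \(\D\) has no binary products, so one genuinely needs density rather than representability here.

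Finally, naturality of \(\eta\) in all three arguments would be verified by direct substitution. Postcomposition by \(s:[p]\to[p']\) commutes with the formula \((f,g,h)\mapsto(f\circ h,g\circ h)\) because \((s\circ f)\circ h=s\circ(f\circ h)\), and symmetrically for \([q]\); precomposition by \(t:[r']\to[r]\) in the contravariant slot commutes because \((f\circ h)\circ t=f\circ(h\circ t)\). Alternatively one may simply invoke the naturality of the density isomorphism itself. Either way the claim follows with no further obstacle.
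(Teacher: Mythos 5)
Your proof is correct and follows essentially the same route as the paper: both identify the coend as an instance of the density (co-Yoneda) formula applied to the presheaf \([m]\mapsto\D([m],[p])\times\D([m],[q])\), with the isomorphism given by \((f,g,h)\mapsto(f\circ h,g\circ h)\). You simply spell out the inverse, the dinaturality check, and naturality more explicitly than the paper's one-line argument does.
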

\begin{proof}
This follows from the density theorem (co-Yoneda lemma). The isomorphism \(\eta\) sends a class \([\alpha:m\to p,\ \beta:m\to q,\ \gamma:r\to m]\) to the pair \((\alpha\circ\gamma,\ \beta\circ\gamma)\).
\end{proof}

The structure defined by \((P,J)\) is the standard \emph{Cartesian promonoidal structure} on \(\D\).

\begin{theorem}
The data \((P,J)\) equips \(\D\) with a symmetric promonoidal structure.
\end{theorem}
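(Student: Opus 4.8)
The plan is to sidestep a direct coherence computation by invoking Day's reconstruction principle, which identifies symmetric promonoidal structures on a small category with symmetric monoidal structures on its presheaf category whose tensor is cocontinuous in each variable. The observation driving the proof is that the levelwise Cartesian product on \(\widehat{\D}\) is exactly such a structure, so its associativity, unit, and symmetry data descend along Yoneda to supply \(\alpha,\lambda,\rho,\sigma\) for \((P,J)\) together with all required coherence.

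First I would recall the reconstruction direction of Day's theory~\cite{Day1970, DayStreet1995}: given a monoidal structure \((\widehat{C},\otimes,I)\) in which \(\otimes\) preserves colimits in each variable separately, one obtains a promonoidal structure on \(C\) with kernel \(P_\otimes(x,y;z)=(\y x\otimes\y y)(z)\) and unit \(J_\otimes(z)=I(z)\); the associator, unitors, and symmetry are the restrictions along the Yoneda embedding of those of \(\otimes\), and Day convolution for \(P_\otimes\) reproduces \(\otimes\). I would then apply this with \(C=\D\), taking \(\otimes=\times\) the levelwise Cartesian product and \(I=\mathbf 1\) the terminal simplicial set. Since \(\widehat{\D}\) is a presheaf topos, it is Cartesian closed, so each functor \((-)\times G\) has a right adjoint and hence preserves all colimits; the cocontinuity hypothesis is therefore satisfied, and the Cartesian product is canonically symmetric monoidal.

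Next I would identify the reconstructed data with \((P,J)\). Evaluating gives \((\y[p]\times\y[q])([r])=\D([r],[p])\times\D([r],[q])\), which is \(P([p],[q];[r])\) by Proposition~\ref{prop:cartesian_P}; and \(\mathbf 1([r])\) is a singleton, equal to \(\D([r],[0])=J([r])\) because \([0]\) is terminal. Transporting the structural isomorphisms of \((\widehat{\D},\times,\mathbf 1)\) along Yoneda then yields natural isomorphisms \(\alpha,\lambda,\rho,\sigma\) in the appropriate variables. The pentagon, triangle, and hexagon axioms for this promonoidal data hold automatically, since they are precisely the restrictions to representables of the corresponding axioms for the symmetric monoidal category \((\widehat{\D},\times,\mathbf 1)\), and every category with finite products is canonically symmetric monoidal, so those axioms come for free.

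The main obstacle I anticipate is not the coherence itself but justifying that the reconstruction applies cleanly: concretely, that the structural isomorphisms of the Cartesian product, once evaluated on representables, are natural in all three slots \([p],[q],[r]\) (covariantly in the first two, contravariantly in the third) as a promonoidal kernel demands, and that the defining coend genuinely collapses to the levelwise product. Both follow from density and full faithfulness of Yoneda, which force the promonoidal coherence from the monoidal coherence downstairs; indeed the co-Yoneda lemma reduces \(\int^{x,y}\D(u,x)\times\D(u,y)\times F(x)\times G(y)\) to \(F(u)\times G(u)\). A reader wanting a self-contained argument could instead define \(\alpha,\lambda,\rho,\sigma\) explicitly from the product of hom-sets and terminality of \([0]\) and verify each diagram by chasing pairs of monotone maps; this is routine but lengthy, and the abstract route renders it unnecessary.
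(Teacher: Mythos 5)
Your argument is correct, but it runs in the opposite logical direction from the paper's. The paper works entirely downstairs at the level of the kernel: Proposition~\ref{prop:cartesian_P} collapses \(P([p],[q];[r])\) to the product of hom-sets \(\D([r],[p])\times\D([r],[q])\), the structural isomorphisms \(\alpha,\lambda,\rho,\sigma\) are then the canonical ones for finite products of \emph{sets} (after co-Yoneda reduction of the relevant double coends), and coherence reduces to coherence for products in \(\Set\); the identification of Day convolution with the levelwise product (Remark~\ref{rem:cartesian_structure}, Appendix~\ref{app:cartesian}) is derived afterwards as a consequence. You instead start upstairs in \(\widehat{\D}\): you take the Cartesian symmetric monoidal structure on the presheaf topos as given, use Cartesian closedness to verify the cocontinuity hypothesis, and invoke Day's reconstruction biequivalence to pull the whole structure, coherence included, back to a promonoidal kernel on \(\D\), which you then match with \((P,J)\) via Proposition~\ref{prop:cartesian_P} and terminality of \([0]\). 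What your route buys is that coherence is genuinely automatic --- no coend manipulation or diagram chase is even sketched --- at the cost of leaning on the reconstruction theorem as a black box and of consuming the ``convolution equals levelwise product'' computation as an input (it is essentially the co-Yoneda collapse you quote at the end) rather than obtaining it as output. What the paper's route buys is a self-contained, elementary argument that stays at the level of the kernel and keeps the Appendix~\ref{app:cartesian} verification as an independent downstream check. Both are valid; if you adopt your version, you should state precisely which form of Day's correspondence you are citing (the symmetric, cocontinuous-in-each-variable version) since the paper's references contain it but the text never isolates that statement.
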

\begin{proof}
Using Proposition~\ref{prop:cartesian_P}, the required isomorphisms (\(\alpha, \lambda, \rho, \sigma\)) are induced by the corresponding canonical isomorphisms for the Cartesian product of sets. Coherence reduces to the coherence identities for products in \(\Set\).
\end{proof}

\begin{remark}\label{rem:cartesian_structure}
The Day convolution induced by this structure is the Cartesian product on \(\widehat{\D}\): \(F\starc G \cong F\times G\). The monoidal unit is \(\mathbf{1}_{\starc} = J = \D^0\). See Appendix~\ref{app:cartesian} for a detailed verification.
\end{remark}

\section{The Hadamard Map}
We introduce an operation based on the multiplicative structure of the ordinals and relate it to the Cartesian structure. We emphasize that the object assignment \([p]\odot [q] = [pq]\) does not extend to a monoidal functor \(\D\times\D\to\D\); the promonoidal setting is required. A related interpretation of pointwise products via convolution appears in Garner and Street~\cite{GarnerStreet2015}.

\subsection{The Hadamard Product of Morphisms}
Multiplication in \(\mathbb N\) is monotone in each variable.

\begin{definition}[Hadamard Product]
For any \([m],[p],[q]\in\D\), define the map
\[
H_{[m]}^{[p],[q]}:\ \D([m],[p])\times\D([m],[q])\longrightarrow \D([m],[pq]),\qquad
(\alpha,\beta)\longmapsto (\alpha\boxt \beta),
\]
where \((\alpha\boxt \beta)(t)\ :=\ \alpha(t)\cdot \beta(t)\) for \(0\le t\le m\).
\end{definition}

This operation is natural in the source: for any \(\theta:[k]\to[m]\),
\((\alpha\boxt \beta)\circ\theta=(\alpha\circ\theta)\boxt (\beta\circ\theta)\).

\subsection{The Hadamard Map as a Natural Transformation}
The naturality implies that the Hadamard product defines a natural transformation of simplicial sets.

\begin{definition}
For fixed \(p,q\), the \emph{Hadamard map} is the natural transformation
\[
H_{p,q}:\ \D^p\times \D^q \longrightarrow \D^{pq}
\]
defined levelwise by \(H_{[m]}(\alpha,\beta)=\alpha\boxt \beta\).
\end{definition}

We verify that this corresponds to a natural transformation between the associated profunctors.

\begin{proposition}\label{prop:delta_natural}
The Hadamard map \(H_{p,q}\) induces a natural transformation
\[
\delta_{p,q}:P(p,q;-)\longrightarrow \D(-,[pq]).
\]
\end{proposition}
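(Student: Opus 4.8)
The plan is to realize \(\delta_{p,q}\) as the composite of the natural isomorphism \(\eta\) from Proposition~\ref{prop:cartesian_P} with the levelwise Hadamard map, and then to reduce naturality in \([r]\) to the pointwise identity already recorded after the definition of the Hadamard product. Concretely, I would set
\[
\delta_{p,q;[r]}\ :=\ H_{[r]}^{[p],[q]}\circ\eta_{p,q;r}:\ P([p],[q];[r])\xrightarrow{\ \cong\ }\D([r],[p])\times\D([r],[q])\xrightarrow{\ \boxt\ }\D([r],[pq]),
\]
so that a coend class \([\alpha:[m]\to[p],\ \beta:[m]\to[q],\ \gamma:[r]\to[m]]\) is sent to \((\alpha\gamma)\boxt(\beta\gamma)\). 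Since \(\eta\) is already natural (being the comparison isomorphism of Proposition~\ref{prop:cartesian_P}), it suffices to verify that the levelwise Hadamard assignment is a morphism of presheaves \(\D(-,[p])\times\D(-,[q])\to\D(-,[pq])\).

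First I would confirm that \(H\) is well defined, i.e.\ that \(\alpha\boxt\beta\) genuinely lies in \(\D([m],[pq])\): the pointwise product of the nondecreasing functions \(\alpha,\beta\) is nondecreasing because \(x\mapsto cx\) is monotone on \(\mathbb N\) for \(c\ge0\), and the bounds \(\alpha(t)\le p\), \(\beta(t)\le q\) give \(\alpha(t)\beta(t)\le pq\), so the image lands in \(\{0,\dots,pq\}=[pq]\). Next I would treat the naturality square. For \(h:[r']\to[r]\) in \(\D\), the contravariant action \(P(\id,\id;h)\) sends the class \([\alpha,\beta,\gamma]\) to \([\alpha,\beta,\gamma\circ h]\), and under \(\eta\) this is exactly simultaneous precomposition, \((\alpha\gamma,\beta\gamma)\mapsto(\alpha\gamma h,\beta\gamma h)\) on \(\D(-,[p])\times\D(-,[q])\). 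Tracing an element around the square: along one route one applies \(H\) and then restricts along \(h\), obtaining \(((\alpha\gamma)\boxt(\beta\gamma))\circ h\); along the other route one restricts first and then applies \(H\), obtaining \((\alpha\gamma h)\boxt(\beta\gamma h)\). These agree precisely by the identity \((\mu\boxt\nu)\circ\theta=(\mu\circ\theta)\boxt(\nu\circ\theta)\) recorded after the definition of the Hadamard product, taken with \(\mu=\alpha\gamma\), \(\nu=\beta\gamma\), \(\theta=h\). Hence the square commutes and \(\delta_{p,q}\) is natural.

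The only real obstacle is bookkeeping rather than mathematics: one must unwind the functorial action of the coend-defined kernel \(P\) on morphisms of \([r]\) and check that, under \(\eta\), it corresponds to the diagonal precomposition action on \(\D(-,[p])\times\D(-,[q])\), being careful about the contravariance in the third slot. Once this identification is in hand, naturality is a one-line consequence of the pointwise compatibility of the Hadamard product with precomposition, and nothing about the multiplicative (as opposed to Cartesian) structure is needed beyond monotonicity and the bound \(\alpha(t)\beta(t)\le pq\). I would close by remarking that passing through \(\eta\) lets \(\delta_{p,q}\) be read either as a map out of the coend or, equivalently, as the levelwise map \(H_{p,q}:\D^p\times\D^q\to\D^{pq}\) of the preceding definition.
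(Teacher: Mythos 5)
Your proof is correct, but it takes a genuinely different route from the paper's. The paper defines \(\delta_{p,q;[r]}\) directly on coend representatives by \([\alpha,\beta,\gamma]\mapsto(\alpha\boxt\beta)\circ\gamma\), and must therefore verify by hand that this respects the coend relation in \([m]\) (which it does using the identity \((\alpha\boxt\beta)\circ f=(\alpha\circ f)\boxt(\beta\circ f)\)), before separately checking contravariant naturality in \([r]\), which reduces to associativity of composition. You instead factor \(\delta_{p,q;[r]}\) through the Cartesian identification \(\eta_{p,q;r}\) of Proposition~\ref{prop:cartesian_P}, landing on the formula \([\alpha,\beta,\gamma]\mapsto(\alpha\gamma)\boxt(\beta\gamma)\); the two definitions agree precisely by the same source-naturality identity, so you are defining the same map. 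What your route buys is that well-definedness on the coend is inherited for free from \(\eta\), and the remaining work is reduced to showing that the levelwise Hadamard assignment is a morphism of presheaves \(\D(-,[p])\times\D(-,[q])\to\D(-,[pq])\), which is exactly the recorded compatibility with precomposition. What it costs is the bookkeeping you yourself flag: you must confirm that \(\eta\) intertwines \(P(\id,\id;h)\) with diagonal precomposition, a point the paper's direct computation sidesteps. The paper's representative-level formula \((\alpha\boxt\beta)\circ\gamma\) is also the form reused verbatim in the subsequent proposition on \(\Theta_{p,q}\), so the paper's choice is not merely stylistic; but as a proof of this proposition alone, your argument is complete and arguably cleaner.
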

\begin{proof}
We define the components \(\delta_{p,q;[r]}\) explicitly on the coend representatives of \(P([p],[q];[r])\). Let \([\alpha:m\to p,\ \beta:m\to q,\ \gamma:r\to m]\) be a representative. We set
\[
\delta_{p,q;[r]}[\alpha,\beta,\gamma]\ :=\ H_{[m]}(\alpha,\beta)\circ \gamma\ =\ (\alpha\boxt\beta)\circ\gamma.
\]
\emph{Well-definedness.} We must check that this definition respects the coend relation in \([m]\). Let \(f:[m']\to[m]\). We compare the images of the related representatives \([\alpha\circ f,\beta\circ f,\gamma]_{m'}\) and \([\alpha,\beta,f\circ\gamma]_{m}\).
\begin{align*}
\delta_{p,q;[r]}[\alpha\circ f,\beta\circ f,\gamma] &= H_{[m']}(\alpha\circ f,\beta\circ f)\circ \gamma \\
&= (H_{[m]}(\alpha,\beta)\circ f)\circ \gamma \quad\text{(Naturality of }H\text{ in source)}\\
&= H_{[m]}(\alpha,\beta)\circ (f\circ \gamma) \\
&= \delta_{p,q;[r]}[\alpha,\beta,f\circ\gamma].
\end{align*}
Thus, \(\delta\) is well-defined.

\emph{Naturality in \([r]\) (contravariant).} We must verify that for any \(h:[r']\to[r]\), the following square commutes:

\begin{center}
\begin{tikzcd}[column sep=large, row sep=large]
  P([p],[q];[r])
    \arrow[r, "{\delta_{p,q;[r]}}"]
    \arrow[d, "{P(\id,\id;h)}"']
  & \Delta([r],[pq])
    \arrow[d, "{\Delta(h,[pq])}"] \\
  P([p],[q];[r'])
    \arrow[r, "{\delta_{p,q;[r']}}"']
  & \Delta([r'],[pq])
\end{tikzcd}
\end{center}

We check this on a representative \([\alpha,\beta,\gamma]\). The action \(P(\id,\id;h)\) acts by precomposition on the contravariant factor \(\gamma\), sending \([\alpha,\beta,\gamma]\) to \([\alpha,\beta,\gamma\circ h]\).

Left-Bottom path:
\[
\delta_{p,q;[r']}([\alpha,\beta,\gamma\circ h]) \;=\; H_{[m]}(\alpha,\beta)\circ (\gamma\circ h).
\]
The action \(\D(h,[pq])\) is precomposition by \(h\).

Top-Right path:
\[
\D(h,[pq])(\delta_{p,q;[r]}[\alpha,\beta,\gamma]) \;=\; (H_{[m]}(\alpha,\beta)\circ \gamma)\circ h.
\]
By associativity of composition, the diagram commutes.
\end{proof}

\subsection{The Induced Map under Day Convolution}
The natural transformation \(\delta\) between the kernels induces a map between the Day convolutions of representables.

\begin{proposition}
There is a natural transformation \(\Theta_{p,q}:\ \D^p\starc \D^q \longrightarrow \D^{pq}\) induced by \(\delta\), which agrees with the Hadamard map \(H_{p,q}\) under the identification \(\D^p\starc \D^q \cong \D^p\times \D^q\).
\end{proposition}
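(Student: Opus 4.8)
The plan is to build \(\Theta_{p,q}\) by first computing the Day convolution of two representables explicitly and then post-composing with \(\delta_{p,q}\). Unfolding the definition,
\[
(\D^p\starc\D^q)(u)\;=\;\int^{x,y\in\D} P(x,y;u)\times\D(x,[p])\times\D(y,[q]),
\]
I would apply the co-Yoneda (density) lemma twice, using the Fubini theorem for coends to separate the two integrations: once to integrate out \(x\) against the representable \(\D(x,[p])\), and once to integrate out \(y\) against \(\D(y,[q])\). Since \(P(x,y;u)\) is covariant in \(x\) and \(y\) while those representables are contravariant in the same variables, the variances match the hypotheses of the density lemma, so each integration is legitimate and replaces \(x\) by \([p]\) and \(y\) by \([q]\). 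This yields a natural isomorphism
\[
\kappa_{p,q}:\ \D^p\starc\D^q\ \xrightarrow{\ \cong\ }\ P([p],[q];-).
\]
I would then define \(\Theta_{p,q}:=\delta_{p,q}\circ\kappa_{p,q}\); it is natural in \(u\) as a composite of natural transformations (\(\kappa\) by naturality of the density isomorphism, \(\delta\) by Proposition~\ref{prop:delta_natural}).

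To compare \(\Theta_{p,q}\) with \(H_{p,q}\) I would factor the identification of Remark~\ref{rem:cartesian_structure}. On representables that isomorphism \(\D^p\starc\D^q\cong\D^p\times\D^q\) is precisely \(\eta_{p,q}\circ\kappa_{p,q}\), where \(\eta\) is the Cartesian isomorphism of Proposition~\ref{prop:cartesian_P}. Consequently, proving that \(\Theta_{p,q}\) agrees with \(H_{p,q}\) under this identification reduces, after cancelling the isomorphism \(\kappa_{p,q}\), to the single pointwise identity
\[
H_{p,q}\;=\;\delta_{p,q}\circ\eta_{p,q}^{-1}:\quad \D^p\times\D^q\ \longrightarrow\ \D^{pq}.
\]

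To verify the latter I would chase an element. Fix \(u=[m]\) and a pair \((a,b)\in\D([m],[p])\times\D([m],[q])\). By Proposition~\ref{prop:cartesian_P} the class \([a,b,\id_{[m]}]\) (taking the mediating object to be \([m]\) and \(\gamma=\id\)) is a representative with \(\eta_{p,q;[m]}[a,b,\id_{[m]}]=(a,b)\), so \(\eta_{p,q}^{-1}(a,b)=[a,b,\id_{[m]}]\); any other representative yields the same value of \(\delta\) by the well-definedness already checked in Proposition~\ref{prop:delta_natural}. Then
\[
\delta_{p,q;[m]}[a,b,\id_{[m]}]\;=\;H_{[m]}(a,b)\circ\id_{[m]}\;=\;a\boxt b\;=\;H_{p,q;[m]}(a,b),
\]
which is exactly the required equality.

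I expect the only genuine subtlety to be bookkeeping rather than mathematics: one must confirm that the density isomorphism \(\kappa_{p,q}\) composed with \(\eta_{p,q}\) is literally the identification of Remark~\ref{rem:cartesian_structure} (so that ``agrees under the identification'' is the correct statement to prove), and that transporting the profunctor transformation \(\delta\) through the coend respects the dinaturality of the colimit cocone. Both are handled by the universal property of the coend together with the well-definedness computation already recorded for \(\delta\); once these are in place the comparison collapses to the one-line element chase above.
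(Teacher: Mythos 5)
Your proposal is correct and matches the paper's argument in substance: your \(\kappa_{p,q}\) (the double co-Yoneda collapse sending \([\pi;\alpha,\beta]\) to \(P(\alpha,\beta;u)(\pi)\)) is exactly the map the paper uses to define \(\Theta_{[r]}\) before post-composing with \(\delta\), and your reduction to \(H_{p,q}=\delta_{p,q}\circ\eta_{p,q}^{-1}\) via the representative \([a,b,\id_{[m]}]\) is the same ``tracing of definitions'' the paper performs. The only difference is presentational: you invoke the density theorem where the paper checks the coend relations by hand, and you make the final element chase explicit where the paper leaves it implicit.
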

\begin{proof}
By definition of the Day convolution:
\[
(\D^p\starc \D^q)([r])
 \;=\; \int^{[x],[y]} P([x],[y];[r])\times \D([x],[p])\times \D([y],[q]).
\]
We define the component \(\Theta_{[r]}\) on representatives. An element is represented by a class
\[
\big[\,\pi\in P(x,y;r)\ ;\ \alpha:x\to p,\ \beta:y\to q\,\big].
\]
We define \(\Theta_{[r]}\) using the action of the morphisms \(\alpha,\beta\) on the kernel \(P\) (functoriality of \(P\)) followed by the map \(\delta\):
\[
\Theta_{[r]}\big([\pi;\alpha,\beta]\big)\ :=\ \delta_{p,q;[r]}\big(P(\alpha,\beta;[r])(\pi)\big).
\]

\emph{Well-definedness.} We must check the coend relations in \([x]\) and \([y]\).
Consider a map \(u:[x']\to[x]\). The relation identifies
\[
\big[\,P(u,\id;r)(\pi')\ ;\ \alpha,\ \beta\,\big]_{x,y} \sim \big[\,\pi'\ ;\ \alpha\circ u,\ \beta\,\big]_{x',y},
\]
where \(\pi'\in P(x',y;r)\).
The left side maps via \(\Theta_{[r]}\) to
\[
\delta_{p,q;r}\big(P(\alpha,\beta;r)(P(u,\id;r)(\pi'))\big) = \delta_{p,q;r}\big(P(\alpha\circ u,\beta;r)(\pi')\big).
\]
The right side maps via \(\Theta_{[r]}\) to
\[
\delta_{p,q;r}\big(P(\alpha\circ u,\beta;r)(\pi')\big).
\]
They are identical, so \(\Theta\) is well-defined. (The well-definedness of \(\delta\), proved in Proposition~\ref{prop:delta_natural}, handles the internal coend relation of \(P\)).

\emph{Explicit form and agreement with \(H\).}
To make \(\Theta\) explicit, let \(\pi\) be represented by \([\sigma_x:m\to x,\ \sigma_y:m\to y,\ \gamma:r\to m]\).
Then \(P(\alpha,\beta;[r])(\pi) = [\alpha\circ\sigma_x,\ \beta\circ\sigma_y,\ \gamma]\). Applying \(\delta\) yields:
\[
\Theta_{[r]}\Big(\big[\,[\sigma_x,\sigma_y,\gamma]\ ;\ \alpha,\beta\,\big]\Big)
\ =\ H_{[m]}(\alpha\circ\sigma_x,\ \beta\circ\sigma_y)\circ \gamma.
\]
Under the isomorphisms established in Appendix~\ref{app:cartesian} (derived from co-Yoneda),
\[
(\D^p\starc \D^q)([r]) \cong P([p],[q];[r]) \cong \D([r],[p])\times\D([r],[q]).
\]
Tracing the definitions, the map \(\Theta_{[r]}\) corresponds exactly to \(\delta_{p,q;[r]}\) under the middle identification, which in turn corresponds to the Hadamard product \(H_{[r]}\) under the right identification (Proposition~\ref{prop:cartesian_P}).
\end{proof}

\begin{remark}
The map \(H_{p,q}\) is generally not an isomorphism. For example, a map \(h:[1]\to[4]\) with \(h(0)=3\) cannot be factored as a Hadamard product of maps \(\alpha,\beta:[1]\to[2]\). This demonstrates that the Hadamard operation is a comparison map from the Cartesian structure.
\end{remark}

\section{Contraction of \(\D^n\)}
We specialize the Hadamard map to define a simplicial homotopy (see e.g., \cite{GoerssJardine1999, May1992} for background on simplicial sets). We set \(q=1\). Since \([n\cdot 1]=[n]\), we obtain a map:

\begin{definition}
Fix \(n\ge0\). Define the map \(H^n:\D^n\times\D^1\to\D^n\) as the specialization \(H_{n,1}\).
\[
H^n_{[m]}(\alpha,\beta):=\alpha\boxt \beta.
\]
\end{definition}

Since \(H^n\) is derived from a natural transformation, it is a map of simplicial sets.

\begin{theorem}[Contractibility of \(\D^n\)]
For every \(n\ge0\), \(H^n\) is a simplicial homotopy from the identity map on \(\D^n\) to the constant map at the \(0\)-vertex.
\end{theorem}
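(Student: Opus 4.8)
The plan is to unwind the definition of a simplicial homotopy and reduce the theorem to a one-line pointwise arithmetic identity. Recall that a simplicial homotopy between two maps \(f,g:X\to Y\) is exactly a morphism of simplicial sets \(H:X\times\D^1\to Y\) whose two restrictions along the vertex inclusions \(\D^0\rightrightarrows\D^1\) (using \(X\times\D^0\cong X\)) recover \(f\) and \(g\). Since the preceding discussion already records that \(H^n\) is a morphism of simplicial sets, the entire content of the statement is the computation of these two restrictions; I would therefore say this at the outset and spend no further effort on naturality.

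First I would identify the two vertices of \(\D^1\) concretely at the level of elements. The vertex inclusions are \(d^1,d^0:[0]\to[1]\) with \(d^1(0)=0\) and \(d^0(0)=1\); by the Yoneda lemma these induce, at each level \([m]\), the two constant elements of \((\D^1)_m=\D([m],[1])\), namely the constant map \(c_0\) at \(0\) and the constant map \(c_1\) at \(1\). Thus restricting \(H^n\) along a vertex is the same as fixing \(\beta=c_0\) or \(\beta=c_1\) in the formula \(H^n_{[m]}(\alpha,\beta)=\alpha\boxt\beta\). Next I would evaluate against each constant: for \(\beta=c_0\) we get \((\alpha\boxt c_0)(t)=\alpha(t)\cdot 0=0\) for all \(t\), so \(\alpha\boxt c_0\) is the constant map at \(0\in[n]\); as \(\alpha\) ranges over \((\D^n)_m\) this is precisely the composite \(\D^n\to\D^0\to\D^n\) selecting the \(0\)-vertex, i.e.\ the constant map at the \(0\)-vertex. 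For \(\beta=c_1\) we get \((\alpha\boxt c_1)(t)=\alpha(t)\cdot 1=\alpha(t)\), so \(\alpha\boxt c_1=\alpha\) and the restriction is the identity on \(\D^n\). Both identities hold at every level \([m]\) and, being restrictions of the simplicial map \(H^n\), are automatically natural in \([m]\), so they assemble into the asserted equalities of simplicial maps.

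This exhibits \(H^n\) as a simplicial homotopy whose endpoint at the \(0\mapsto 1\) vertex (\(d^0\)) is the identity and whose endpoint at the \(0\mapsto 0\) vertex (\(d^1\)) is the constant map at the \(0\)-vertex, which is exactly the claimed homotopy. I do not expect any real obstacle: the only point that genuinely requires care is the bookkeeping in the second step — confirming that a vertex of \(\D^1\) corresponds to a \emph{constant} \(\beta\), and that ``constant map at the \(0\)-vertex'' means the factorization through \(\D^0\) followed by the selected vertex. The arithmetic \(x\cdot 0=0\) and \(x\cdot 1=x\) is the whole reason the contraction works, while monotonicity of \(\alpha\boxt\beta\) and naturality were already discharged in the construction of \(H_{p,q}\), so no additional verification is needed here.
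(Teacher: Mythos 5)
Your proposal is correct and follows essentially the same route as the paper's proof: identify the two vertex inclusions of \(\D^1\) with the constant maps \(c_0,c_1:[m]\to[1]\), then evaluate \(\alpha\boxt c_1=\alpha\) and \(\alpha\boxt c_0=\) the constant \(0\) map using \(x\cdot 1=x\) and \(x\cdot 0=0\). The extra bookkeeping you supply (Yoneda identification of the vertices, naturality being inherited from \(H^n\) already being a simplicial map) is consistent with, and only slightly more explicit than, what the paper does.
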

\begin{proof}
We verify the boundary conditions. Let \(\kappa^\varepsilon:[0]\to[1]\) be the map picking the vertex \(\varepsilon\in\{0,1\}\). We examine the composition
\[
\begin{tikzcd}
\D^n\cong\D^n\times \D^0 \ar[r,"\ \ \id\times \kappa^\varepsilon_*"] &
\D^n\times \D^1 \ar[d,"H^n"] \\
& \D^n
\end{tikzcd}
\]
At level \([m]\), \(\alpha\in\D([m],[n])\) is mapped to \((\alpha, c_\varepsilon)\), where \(c_\varepsilon:[m]\to[1]\) is the constant map with value \(\varepsilon\) (obtained as \(\kappa^\varepsilon\circ !\)).

Case \(\varepsilon=1\): \(H^n_{[m]}(\alpha, c_1) = \alpha\boxt c_1\). Since \((\alpha\boxt c_1)(t) = \alpha(t)\cdot 1 = \alpha(t)\), the result is \(\alpha\).

Case \(\varepsilon=0\): \(H^n_{[m]}(\alpha, c_0) = \alpha\boxt c_0\). Since \((\alpha\boxt c_0)(t) = \alpha(t)\cdot 0 = 0\), the result is the constant map at the 0-vertex of \(\D^n\).
\end{proof}

\section{Geometric Realization}
We analyze the geometric realization of the homotopy \(H^n\). The geometric realization of a simplicial set \(X\) was introduced by Milnor~\cite{Milnor1957} as a quotient construction. We utilize the coend formalism common in modern categorical expositions (e.g., Kelly~\cite{Kelly1982}):
\[
\lvert X\rvert\ \cong\ \int^{[m]\in\D} X_m\times \lvert\D^m\rvert.
\]
A point of \(\lvert\D^n\rvert\times \lvert\D^1\rvert \cong \lvert\D^n\times\D^1\rvert\) is represented by a class \((\alpha,\beta;u)\) with
\(\alpha:[m]\to[n]\), \(\beta:[m]\to[1]\) monotone and \(u=(u_0,\dots,u_m)\) barycentric coordinates in \(\lvert\D^m\rvert\).

A monotone map \(\beta:[m]\to[1]\) is a step function, characterized by a unique index \(-1\le k\le m\) such that \(\beta(i)=0\) for \(i\le k\) and \(\beta(i)=1\) for \(i\ge k+1\).
Set \(s:=\sum_{i=k+1}^{m}u_i\in[0,1]\). This \(s\) is the coordinate corresponding to the vertex 1 in the \(|\D^1|\) factor realized by \(|\beta|(u)\).

The realized homotopy \(|H^n|\) maps \((\alpha,\beta;u)\) to a point \(v\in|\D^n|\) represented by \((\alpha\boxt\beta; u)\). Its barycentric coordinates \(v=(v_0,\dots,v_n)\) are:
\[
v_j \;=\; \sum_{(\alpha\boxt\beta)(i)=j} u_i \;=\; \sum_{\substack{i>k\\ \alpha(i)=j}}u_i \quad (1\le j\le n),
\]
\[
v_0 \;=\; \sum_{(\alpha\boxt\beta)(i)=0} u_i \;=\; \Big(\sum_{i\le k}u_i\Big)+\sum_{\substack{i>k\\ \alpha(i)=0}}u_i \;=\; (1-s)+\sum_{\substack{i>k\\ \alpha(i)=0}}u_i.
\]
At \(s=0\) this is the \(0\)-vertex; at \(s=1\) it is \(|\alpha|(u)\).

\paragraph{Prism cells determined by \(k\).}
Fix \(m\ge0\). Let \(e_0,\dots,e_m\) be the vertices of \(|\D^m|\). We identify \(|\D^1|\) with the unit interval \([0,1]\).
For each \(k\in\{0,\dots,m\}\) define the \((m{+}1)\)-simplex (“prism cell”) \(S_k\subset |\D^m|\times[0,1]\) as
\[
S_k\ :=\ \mathrm{conv}\Big(\ (e_0,0),\ \dots,\ (e_k,0),\ (e_k,1),\ (e_{k+1},1),\ \dots,\ (e_m,1)\ \Big).
\]
Equivalently,
\[
S_k=\bigl\{(u,t)\ \big|\ u\in|\D^m|,\ \ \sum_{i>k}u_i\ \le\ t\ \le\ u_k+\sum_{i>k}u_i \bigr\}.
\]
The prisms \(\{S_k\}_{k=0}^m\) cover \(|\D^m|\times[0,1]\). The detailed analysis (omitted here for brevity, but standard in simplicial realization theory) shows the cell \(S_k\) is exactly the image of all representatives with step index \(k\) under the Milnor coend quotient.

\paragraph{Affineness of \(|H^n|\) on \(S_k\).}
Let \(\alpha:[m]\to[n]\). The realized homotopy \(|H^n|\) acts on the vertices of \(S_k\) as follows:
\begin{align*}
(e_i,0)&\longmapsto \text{vertex }0\quad(0\le i\le k),\\
(e_i,1)&\longmapsto \text{vertex }\alpha(i)\quad(k\le i\le m).
\end{align*}
There is a unique affine map \(|H^n|\,:\ S_k\longrightarrow |\D^n|\) determined by these values on the vertices.

Explicitly, if \((u,t)\in S_k\) is written as a convex combination of its vertices:
\[
(u,t)=\sum_{i=0}^k a_i\,(e_i,0)\ +\ \sum_{i=k}^m b_i\,(e_i,1),
\]
with \(a_i, b_i \ge 0\) and \(\sum a_i + \sum b_i = 1\). The time coordinate is \(t=\sum_{i=k}^m b_i\).
The barycentric coordinates \(v=(v_0,\dots,v_n)\) of the image \(|H^n|(u,t)\) are calculated by applying the affine map:
\[
v_j=\sum_{\substack{i=k\\ \alpha(i)=j}}^m b_i \quad(1\le j\le n),
\qquad
v_0=\sum_{i=0}^k a_i\ +\ \sum_{\substack{i=k\\ \alpha(i)=0}}^m b_i.
\]
This formula is affine in the coefficients \((a_i,b_i)\), confirming \(|H^n|\) is affine on each \(S_k\).

\paragraph{The standard PL contraction to the \(0\)-vertex.}
For \(x\in|\D^n|\) with coordinates \(w\) and \(t\in[0,1]\), the standard piecewise‑linear contraction \(C:\ |\D^n|\times[0,1]\longrightarrow |\D^n|\) is defined by
\[
C(x,t)\ \text{ coordinates: }\
c_0=(1-t)+t\,w_0,\qquad c_j=t\,w_j\ \ (1\le j\le n).
\]
On each \(S_k\), both \(|H^n|\) and the map \(C\circ(|\alpha|\times\id)\) are affine. They agree on all vertices of \(S_k\). Since they agree on the vertices and are affine, they coincide on the entire cell \(S_k\). Therefore \(|H^n|\) is the standard PL contraction of \(\lvert\D^n\rvert\) to the \(0\)-vertex.


\appendix
\section{Verification of the Cartesian Product via Day Convolution}\label{app:cartesian}

In this appendix, we verify the claim in Remark~\ref{rem:cartesian_structure} that the promonoidal structure \((P,J)\) defined in Section 3 induces the Cartesian product structure (levelwise product) on the category of simplicial sets \(\widehat{\D}\).

\paragraph{The Unit.}
The unit of the Day convolution is \(\mathbf{1}_{\starc}([r]) = J([r]) = \D([r],[0])\). Since \([0]\) is terminal in \(\D\), \(J([r])\) is a singleton set for all \([r]\). This confirms that \(\mathbf{1}_{\starc}\) is the terminal object in \(\widehat{\D}\).

\paragraph{The Product.}
We compute the Day convolution of two simplicial sets \(F\) and \(G\) at level \([r]\):
\[
(F\starc G)([r])\ :=\ \int^{[p],[q]\in\D} P([p],[q];[r])\times F([p])\times G([q]).
\]
We use the identification \(P([p],[q];[r]) \cong \D([r],[p])\times \D([r],[q])\).
\[
(F\starc G)([r])\ \cong\ \int^{[p],[q]} \left(\D([r],[p])\times \D([r],[q])\right) \times F([p])\times G([q]).
\]
Rearranging the terms and using the Fubini theorem for coends (as the Cartesian product in \(\Set\) preserves coends):
\[
(F\starc G)([r])\ \cong\ \left(\int^{[p]} \D([r],[p])\times F([p])\right) \times \left(\int^{[q]} \D([r],[q])\times G([q])\right).
\]
Finally, we apply the co-Yoneda lemma, \(\int^{[p]} \D([r],[p])\times F([p]) \cong F([r])\), to both factors:
\[
(F\starc G)([r])\ \cong\ F([r]) \times G([r]).
\]
This confirms that the Day convolution \(\starc\) is the levelwise product, and thus the Cartesian product in \(\widehat{\D}\).

\bibliographystyle{alpha}
\bibliography{CartesianPromonoidalKernel} 

\end{document}